\documentclass{amsart}
\usepackage{graphicx}
\usepackage{amsfonts}
\newtheorem{tm}{Theorem}

\newtheorem{defi}{Definition}

\newtheorem{rem}{Remark}

\newtheorem{ex}{Example}

\newtheorem{prop}{Proposition}
\newtheorem{nota}{Notation}

\newtheorem{prob}{Problem}

\begin{document}
\title{Moduli of roots of hyperbolic polynomials and Descartes' rule of signs}
\author{Vladimir Petrov Kostov}
\address{Universit\'e C\^ote d’Azur, CNRS, LJAD, France} 
\email{vladimir.kostov@unice.fr}

\begin{abstract}
  A real univariate polynomial with all roots real is called hyperbolic. By 
  Descartes' rule of signs for hyperbolic polynomials (HPs) with all
  coefficients nonvanishing, a HP with $c$ sign changes and $p$ sign
  preservations in the sequence of its coefficients has exactly $c$ positive
  and $p$ negative roots. For $c=2$ and for degree $6$ HPs, we
  discuss the question: When the
  moduli of the $6$ roots of a HP are arranged in the increasing order on the
  real
  half-line, at which positions can be the moduli of its two positive roots
  depending on the positions of the two sign changes in the sequence of
  coefficients?\\ 

  {\bf Key words:} real polynomial in one variable; hyperbolic polynomial; sign
  pattern; Descartes' rule of signs\\ 

{\bf AMS classification:} 26C10; 30C15

\end{abstract}

\maketitle

\section{Introduction}

The classical Descartes' rule of signs states that given a real univariate
degree $d$ polynomial $P$, the number $pos$ of its positive roots is not
larger than the number $c$ of the sign changes in the sequence of its
coefficients. In the present paper we consider only polynomials with all
coefficients nonvanishing and with positive leading coefficients. Thus the
number of sign changes for the polynomial $P(-x)$ is equal to the number $p$
of sign preservations for the polynomial $P(x)$ hence one has $neg\leq p$,
where $neg$ stands for the number of negative roots of $P$. If $P$ is
{\em hyperbolic}, i.e. with all roots real, then the conditions

$$c~+~p~=~d~=~pos~+~neg~~~\, ,~~~\, pos~\leq ~c~~~\, {\rm and}~~~\,
neg~\leq ~p~,$$
imply $pos=c$ and $neg=p$. We consider only the {\em generic case} when all
moduli of roots are distinct. Suppose that the $d$ moduli of roots are
arranged in the increasing order on the positive half-line. Then one can
formulate the following problem:

\begin{prob}\label{prob1}
  Knowing the positions of the sign changes and sign
  preservations in the sequence of coefficients of a given hyperbolic
  polynomial (HP), what positions can occupy
  the moduli of its positive roots in this arrangement?
  \end{prob}

For $c=1$, the exhaustive answer to this problem (for any degree $d$) is given
in~\cite{Ko1}. Also in~\cite{Ko1} one can find the answer to
Problem~\ref{prob1}
for $d\leq 5$ and $c=2$. In the present paper we give the answer to it
for $d=6$ and $c=2$.

\begin{rem}
  {\rm For $d\leq 5$, it is sufficient to study Problem~\ref{prob1}
for $c\leq 2$, because the polynomial $(-1)^dP(-x)$ has $p$
sign changes and $c$ sign preservations in the sequence of its coefficients.
Thus if $P(x)$ has more than $2$ sign changes, then $(-1)^dP(-x)$ has not more
than $2$ sign changes and one can consider $(-1)^dP(-x)$ instead of $P(x)$. 
For $d\geq 6$ this is not so. In
particular, for $d=6$, HPs with $c=3$ remain such when $P(x)$ is replaced by
$(-1)^dP(-x)$.}
\end{rem}

The paper is structured as follows. In Section~\ref{secknown} we recall
some definitions and results of \cite{Ko1} and we mention some problems
related to Problem~\ref{prob1}. In Section~\ref{secsol} we first remind which 
cases have to be considered for $d=6$ according to the signs of the coefficients
of the HP, and then we resolve Problem~\ref{prob1} in each of these cases.

\section{Definitions and known results\protect\label{secknown}}

\begin{defi}
  {\rm A {\em sign pattern (SP)} of length $d+1$ is a sequence of $d+1$
    signs $+$ or~$-$. We say that the polynomial $P:=x^d+\sum _{j=0}^{d-1}a_jx^j$
    defines the SP $(+$, sgn$(a_{d-1})$, $\ldots$, sgn$(a_0))$. We consider
    polynomials with positive leading coefficients, so the SPs we deal with
    begin with~$+$. In the proofs we use also SPs some of whose components
    are zeros.}
\end{defi}

\begin{nota}
  {\rm We denote the moduli of the two positive roots of a HP (with two sign
    changes in the sequence of its coefficients) by $0<\beta <\alpha$. By
    $0<\gamma _1<\cdots <\gamma _{d-2}$ we denote the moduli of its negative
    roots. By $(a,b,w)$, $a$, $b$, $w\in \mathbb{N}\cup 0$, $a+b+w=d-2$ we
    denote the case when $\gamma _a<\beta <\gamma _{a+1}$ and
    $\gamma _{a+b}<\alpha <\gamma _{a+b+1}$ setting $\gamma _0:=0$ and
    $\gamma _{d-1}:=\infty$. By $\Sigma _{m,n,q}$, for $c=2$ (resp. by
    $\Sigma _{m,n}$, for $c=1$) we denote the SP consisting
    of $m$ pluses followed by $n$ minuses followed by $q$ pluses, where
  $m+n+q=d+1$ (resp.  consisting
    of $m$ pluses followed by $n$ minuses, where $m+n=d+1$). For $c=1$,
    the moduli of the positive and negative roots are denoted by $\alpha$ and
    $\gamma _1<\cdots <\gamma _{d-1}$; the case $(a,b)$, $a+b=d-1$, means
    $\gamma _a<\alpha <\gamma _{a+1}$. The notation $(a,b,w)$ and $(a,b)$ is
  different from the one used in~\cite{Ko1}.}
  \end{nota}

\begin{defi}
  {\rm (1) Given a SP one defines its corresponding
    {\em canonical arrangement} as
    follows. The increasing order of moduli of positive and negative roots
    on the real half-line coincides with the order of sign changes and sign
    preservations respectively when the SP is read backward. Example: for
    $d=6$ and $c=2$, the SP $\Sigma _{2,4,1}=(+,+,-,-,-,-,+)$ when read backward
    gives the following order of sign changes and sign preservations:
    $(c,p,p,p,c,p)$. Hence the 
    canonical arrangement defined by this SP is 
    $\beta <\gamma _1<\gamma _2<\gamma _3<\alpha <\gamma _4$ which is the case
    $(0,3,1)$. More generally, for the SP $\Sigma _{m,n,q}$, the corresponding
    canonical arrangement defines the case $(q-1,n-1,m-1)$.

    (2) We say that a SP $\Sigma _{m,n,q}$ (resp. $\Sigma _{m,n}$)
    is {\em realizable} in the case
    $(a,b,w)$ (resp. $(a,b)$) if there exists a degree $d$ HP with $c=2$ and
    $p=d-2$ (resp. with $c=1$ and $p=d-1$) defining
    this SP, with distinct moduli of its roots which define the case $(a,b,w)$
    (resp. the case $(a,b)$). We can also say that the case $(a,b,w)$ is
    realizable with the SP $\Sigma _{m,n,q}$.}
\end{defi}

The following result can be found in~\cite{Ko1}:

\begin{tm}\label{tmknown}
  (1) Every SP has a canonical realization, i.e.
  is realizable in the case defined by its canonical arrangement. The
  SPs $\Sigma _{1,d}$, $\Sigma _{d,1}$, $\Sigma _{1,d-1,1}$ and $\Sigma _{m,1,d-m}$
  are realizable only in this case.

  (2) The SPs $\Sigma _{d-2,2,1}$ and $\Sigma _{d-3,3,1}$ are realizable 
  in the case $(1,0,d-3)$. They are realizable only in this case and in
  cases of the kind $(0,b,w)$, $b+w=d-2$.
  For $n\geq 4$, the SP
  $\Sigma _{m,n,1}$ is realizable only in cases of the kind $(0,b,w)$, $b+w=d-2$.
\end{tm}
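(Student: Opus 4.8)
The plan is to work with the factorization $P(x)=(x-\alpha)(x-\beta)R(x)$, where $R(x)=\prod_{i=1}^{d-2}(x+\gamma_i)=\sum_j r_j x^j$ has all positive coefficients, and to set $s:=\alpha+\beta$, $p:=\alpha\beta$. Expanding gives the coefficient formula $a_k=p\,r_k-s\,r_{k-1}+r_{k-2}$ (with $r_j:=0$ for $j<0$ or $j>d-2$), so each sign condition of a pattern $\Sigma_{m,n,1}$ becomes an inequality among $r_k,s,p$. Writing $r_k=e_{d-2-k}(\gamma)$ and passing to the reciprocals $\delta_i:=1/\gamma_i$, these inequalities take the clean form $a_k<0\iff \sigma_k-(u+v)\sigma_{k-1}+uv\,\sigma_{k-2}<0$, where $\sigma_j:=e_j(\delta_1,\dots,\delta_{d-2})$, $u:=1/\alpha$, $v:=1/\beta$ (so $u<v$). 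I would record at the outset that $a_0=p\,r_0>0$ always, and that the case $(a,b,w)$ is read off from the position of $\beta,\alpha$ among the $\gamma_i$.

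For the ``only'' part (which governs both families at once) the key is the single inequality $a_1<0$, which in this notation reads $\sum_i 1/\gamma_i<1/\alpha+1/\beta$. I would then argue: if $a\ge 2$ then $\gamma_1,\gamma_2<\beta$, whence $1/\gamma_1+1/\gamma_2>2/\beta>1/\alpha+1/\beta$, a contradiction; and if $a=1$ but $b\ge 1$ then $\gamma_1<\beta$ and $\gamma_2<\alpha$, whence $1/\gamma_1+1/\gamma_2>1/\beta+1/\alpha$, again a contradiction. Hence any realization with $a\ge 1$ must be exactly $(1,0,d-3)$, which gives the stated upper bound on realizable cases for \emph{every} $\Sigma_{m,n,1}$ with $n\ge 1$ and disposes of the ``only in this case and in cases $(0,b,w)$'' assertion uniformly.

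To show that $\Sigma_{d-2,2,1}$ and $\Sigma_{d-3,3,1}$ really are realizable in $(1,0,d-3)$, I would exhibit explicit root configurations. Taking $\gamma_1$ in the interval $(p/s,\beta)$, which makes the cubic factor $C:=(x-\alpha)(x-\beta)(x+\gamma_1)$ acquire two negative coefficients, and the remaining $\gamma_i$ large enough that $\sum_i 1/\gamma_i<1/\alpha+1/\beta$, one checks on the convolution $a_k=\sum_i c_i S_{k-i}$ (with $C=\sum_i c_i x^i$ and $S=\prod_{i\ge 2}(x+\gamma_i)$) that exactly two, respectively three, consecutive coefficients $a_1,a_2(,a_3)$ are negative while $a_3$ (resp.\ $a_4$) is positive; this produces $\Sigma_{d-2,2,1}$ (resp.\ $\Sigma_{d-3,3,1}$). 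The freedom in the $\gamma_i$ makes this a short verification, and for general $d$ it can be propagated from low degree by the operation ``multiply by a very large factor $(x+\gamma)$'', which sends $\Sigma_{m,n,q}$ to $\Sigma_{m+1,n,q}$ and the case $(a,b,w)$ to $(a,b,w+1)$.

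The main obstacle is the non-realizability of $\Sigma_{m,n,1}$ with $n\ge 4$ in $(1,0,d-3)$; equivalently, in this case the negative block has length at most $3$. The case $m=1$ is immediate, since $a_{d-1}=\sum_i\gamma_i-(\alpha+\beta)$ and the $d-3$ moduli $\gamma_2,\dots,\gamma_{d-2}$ all exceed $\alpha$, so $a_{d-1}>0$ for $d\ge 5$ and the pattern cannot begin with a single plus. For $m\ge 2$ I would factor out the small root, $P=(x+\gamma_1)\widehat P$ with $\widehat P=(x-\alpha)(x-\beta)\prod_{i\ge 2}(x+\gamma_i)$ of degree $d-1$ and in case $(0,0,d-3)$; since $\widehat a_{d-2}=\sum_{i\ge 2}\gamma_i-s>0$, its pattern has at least two leading pluses. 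Passing from $\widehat P$ to $P$ through $a_k=\widehat a_{k-1}+\gamma_1\widehat a_k$ can lengthen the negative block by at most one, and the arithmetic shows that reaching length four would force $\widehat a_0\widehat a_4<\widehat a_1\widehat a_3$ for the relevant boundary coefficients. The crux is therefore to establish the reverse inequality $\widehat a_0\widehat a_4\ge\widehat a_1\widehat a_3$ for a hyperbolic $\widehat P$ with this sign structure; I expect to obtain it from the explicit expressions $\widehat a_k=p\,S_k-s\,S_{k-1}+S_{k-2}$ together with the constraints $\gamma_i>\alpha$ (equivalently, Newton-type inequalities for the real-rooted factor $S$). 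Carrying this estimate out uniformly in $d$---or, alternatively, deriving the contradiction directly from the four reciprocal inequalities $\sigma_k-(u+v)\sigma_{k-1}+uv\,\sigma_{k-2}<0$ for $k=1,\dots,4$ under $\delta_1>v>u>\delta_2>\cdots$---is the step I expect to require the most care.
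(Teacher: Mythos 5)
The paper does not actually prove this theorem; it is quoted from \cite{Ko1}, so there is no in-paper argument to compare yours against. Judged on its own merits, your proposal contains one complete and attractive ingredient: since $q=1$ forces $a_0>0$ and $a_1<0$, and $a_1/a_0=\sum_i 1/\gamma_i-1/\alpha-1/\beta$, your reciprocal-sum argument correctly shows that any realization of any $\Sigma_{m,n,1}$ has either $a=0$ or $(a,b)=(1,0)$; this does dispose of the ``only in $(1,0,d-3)$ and in cases $(0,b,w)$'' assertion, and your computation showing $a_{d-1}>0$ when $m=1$ is also fine.

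There remain, however, two genuine gaps. First, part (1) of the theorem is not addressed at all: you say nothing about why every SP admits a canonical realization, nor why $\Sigma_{1,d}$, $\Sigma_{d,1}$, $\Sigma_{1,d-1,1}$ and $\Sigma_{m,1,d-m}$ admit no other realization. Note that for $\Sigma_{m,1,q}$ with $q\ge 2$ the trailing signs are $+,+$, so your $a_1<0$ mechanism does not even apply, and pinning down all three of $a$, $b$, $w$ requires information from coefficients other than $a_1$. Second, the deepest claim of part (2) --- that for $n\ge 4$ the case $(1,0,d-3)$ is impossible --- is, for $m\ge 2$, reduced to an inequality $\widehat a_0\widehat a_4\ge\widehat a_1\widehat a_3$ that you neither derive nor prove. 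The reduction itself is only asserted: from $a_k=\widehat a_{k-1}+\gamma_1\widehat a_k$ one must show that no choice of $\gamma_1>0$ makes all four quantities $\widehat a_{k-1}+\gamma_1\widehat a_k$, $k=1,\dots,4$, negative, which is a compatibility condition on several intervals rather than a single product inequality; and the proposed route to the inequality via ``Newton-type inequalities'' is doubtful because the $\widehat a_k$ have mixed signs in exactly the range at issue. By contrast, the positive realizability of $\Sigma_{d-2,2,1}$ and $\Sigma_{d-3,3,1}$ in $(1,0,d-3)$, though only sketched, is easily repaired: explicit low-degree examples (the paper exhibits them for $d=6$) combined with your observation that multiplying by $(x+\gamma)$ with $\gamma$ large sends $\Sigma_{m,n,q}$ to $\Sigma_{m+1,n,q}$ and the case $(a,b,w)$ to $(a,b,w+1)$ settle that part.
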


\begin{rem}\label{remrevert}
  {\rm Given a degree $d$ polynomial $P(x)$ we define its
    {\em reverted} polynomial $P^R$ as $P^R(x):=x^dP(1/x)$. It is clear that if
    the SP $\Sigma _{m,n,q}$ (resp. $\Sigma _{m,n}$)
    is realizable in the case $(a,b,w)$ (resp. $(a,b)$) by the
    HP $P(x)$, then the SP $\Sigma _{q,n,m}$ (resp. $\Sigma _{n,m}$)
    is realizable in the case $(w,b,a)$ (resp. $(b,a)$) 
    by the polynomial sgn$(P(0))P^R(x)$.}
  \end{rem}

Another problem about real univariate polynomials (not necessarily hyperbolic)
inspired by Descartes' rule of signs is the following one:

\begin{prob}\label{prob2}
  Suppose that a SP with $c$ sign changes and $p$ sign preservations is given.
  For which pairs
  of nonzero integers $(pos, neg)$ satisfying the conditions $pos\leq c$,
  $neg\leq p$ and $c-pos\in 2\mathbb{N}\cup 0\ni p-neg$ do there exist such
  polynomials defining the given SP and having exactly $pos$ positive and
  $neg$ negative roots, all distinct?
\end{prob} 

The problem seems to have been formulated for the first time in \cite{AJS}.
Its exhaustive answer for $d\leq 8$ is to be found in  \cite{Gr},
\cite{AlFu}, \cite{FoKoSh} and \cite{KoCzMJ} (and this answer is
not trivial). An interesting particular case for $d=11$ is considered in
\cite{KoMB}. In \cite{FoNoSh} a tropical analog of Descartes' rule of signs is
formulated. Different aspects of the theory of HPs are exposed in \cite{Ko}.
For metric inequalities involving moduli of roots of polynomials see
\cite{AKNR} and~\cite{Fo}.

\section{Resolution of Problem~\protect\ref{prob1} for $d=6$
  \protect\label{secsol}}

For $d=6$, the SPs with $c=2$ which need to be considered
are the following ones:
$\Sigma _{1,5,1}$, $\Sigma _{m,1,q}$ ($m+q=6$), $\Sigma _{k,6-k,1}$
($k=2$, $3$ and $4$), $\Sigma _{2,3,2}$ and $\Sigma _{3,2,2}$. The remaining
SPs are obtained from these ones by reversion, see Remark~\ref{remrevert}.
We consider different SPs in the subsequent subsections. In
Subsection~\ref{subsecsum} we summarize the results.

In the proofs we use the following notation:

\begin{nota}\label{notaproofs}
  {\rm We remind that we consider HPs having four negative and two positive
    roots. These roots are denoted by}

  $$\xi _1<\xi _2<\xi _3<\xi _4<0<\xi _5<\xi _6~.$$
  {\rm For
  the roots of the derivatives we use the notation}

  $$\zeta _1<\zeta _2<\zeta _3<\zeta _4<\zeta _5~~~,~~~\xi _j<\zeta _j<\xi _{j+1}$$
  {\rm (the latter inequalities result from Rolle's theorem). One has
    $\zeta _3<0<\zeta _4$, if the last but first sign of the SP is $+$, and
    $\zeta _4<0<\zeta _5$,
  if it is~$-$.}
  \end{nota}

\begin{rem}
  {\rm In the proofs we use one-parameter deformations of given HPs in which
    the deformation parameter is considered only for these nonnegative values
    for which the deformed polynomial is hyperbolic.}
  \end{rem}

  \begin{rem}
    {\rm For $n$ fixed, one could ask the question what the possible values
      of the quantity $b$ can be. As we shall see, for $d=6$ and $n=2$, one
      has $b\leq 4$. The following example (with $d=7$ and $n=2$) shows
      that $b$ can attain the value $5$:}
   
   $$\begin{array}{ccl}
      D&:=&(x-0.9)(x+0.98)(x+0.99)(x+1)(x+1.01)(x+1.02)(x-1.1)\\ \\ &=&
      x^7+3x^6+0.9895x^5-5.0505x^4-5.09899496x^3\\ \\ &&+0.90101496x^2+
      2.94951496x+0.9895050396~.
     \end{array}$$
    {\rm The polynomial $D$ defines the SP $\Sigma _{3,2,3}$ and realizes the
      case $(0,5,0)$. To obtain such examples with arbitrarily large values of
      $d\geq 7$ (and
      with $n=2$) it suffices to multiply $D$ by polynomials of the form
      $\prod _{\mu =1}^{\mu ^*}(1+\varepsilon _{\mu}x)
      \prod _{\nu =\mu ^*+1}^{\mu ^*+\nu ^*}(x+\varepsilon _{\nu})$, where
      $\varepsilon _i$ are small positive quantities. Indeed, for
      $1\leq \mu \leq \mu ^*$, the moduli of the negative roots
      $-1/\varepsilon _{\mu}$ are large whereas for
      $\mu ^*+1\leq \mu \leq \mu ^*+\nu ^*$, the moduli of the negative roots
      $-\varepsilon _{\mu}$ are small; the polynomial then defines the SP
      $\Sigma _{3+\mu ^*,2,3+\nu ^*}$ and realizes the case $(\nu ^*, 5, \mu ^*)$.} 
  \end{rem}

\subsection{The SPs $\Sigma _{1,5,1}$ and $\Sigma _{m,1,q}$, $m+q=6$}

These two cases have only canonical realizations, see Theorem~\ref{tmknown}.
This means
that for the HPs realizing the SP $\Sigma _{1,5,1}$ one has
$\beta <\gamma _1<\gamma _2<\gamma _3<\gamma _4<\alpha ~(*)$. This is the case
$(0,4,0)$. 

\begin{ex}\label{ex1}
  {\rm Consider the polynomial $P^*:=(x-0.01)(x+0.25)^4(x-1)$, i.e.}

  $$\begin{array}{lll}
    P^*&=&
    x^6-0.01x^5-0.625x^4-0.30625x^3\\ \\ &&-0.05546875x^2
    -0.0033203125x+0.0000390625~.\end{array}$$
  {\rm This polynomial defines the SP $\Sigma _{1,5,1}$. Hence
    for $\varepsilon >0$ small enough, the polynomial}

  $$(x-0.01)(x+0.25-2\varepsilon )(x+0.25-\varepsilon )(x+0.25+\varepsilon )
  (x+0.25+2\varepsilon )(x-1)$$
  {\rm also defines the SP $\Sigma _{1,5,1}$ and
    has six distinct real roots whose moduli satisfy conditions~$(*)$.}
\end{ex}

\begin{ex}\label{ex2}

  {\rm For $m=1$, $2$ and $3$, the following HPs define
    the corresponding SPs $\Sigma _{m,1,q}$:} 

  $$\begin{array}{lllc}
    (x+0.01)^4(x-1)^2&=&x^6-1.96x^5+0.9206x^4+0.038804x^3&\\ \\
    &&+0.00059201x^2+0.00000398x+10^{-8}&,\\ \\
    (x+0.01)^3(x-1)^2(x+4)&=&x^6+2.03x^5-6.9397x^4+3.790601x^3&\\ \\
    &&+0.117902x^2+0.001193x+0.000004&{\rm and}\\ \\
    (x+0.01)^2(x-1)^2(x+4)^2&=&x^6+6.02x^5+1.1201x^4-23.9794x^3\\ \\
    &&+15.5201x^2+0.3176x+0.0016&.\end{array}$$
  {\rm The reverted of these HPs
    define these SPs with $m=5$, $4$ and $3$ respectively. One can define
    one-parameter deformations of these HPs in which the multiple
    roots split into simple real ones while preserving the signs of the roots
    and of the coefficients of the corresponding polynomial (as this is done
    in Example~\ref{ex1}). For small nonzero values of the deformation
    parameter, the deformations are canonical realizations of the corresponding
  SPs.}
\end{ex}

\subsection{The SP $\Sigma _{2,4,1}$}

For any HP realizing this SP, one has $\beta <\gamma _1$,
see part (2) of Theorem~\ref{tmknown}. Hence a priori the realizable
cases are of the form $(0,b,4-b)$, $0\leq b\leq 4$. The ones with $b=2$,
$3$ and $4$ are realizable: 

$$\begin{array}{l}
  (0,2,2)~~~:~~~
  (x-0.001)(x+0.3)(x+0.4)(x-1)(x+1.01)(x+1.02)=\\ \\
  x^6+1.729x^5-0.16053x^4-1.6063012x^3\\ \\
  -0.83950954x^2-0.122782884x+
  0.000123624~,\\ \\
  (0,3,1)~~~:~~~
  (x-0.001)(x+0.3)(x+0.4)(x+1)(x-1.01)(x+1.02)=\\ \\
  x^6+1.709x^5-0.19491x^4-1.6229468x^3\\ \\ -0.84194086x^2-0.122780436x+
  0.000123624~,\\ \\
  (0,4,0)~~~:~~~
  (x-0.001)(x+0.3)(x+0.4)(x+1)(x+1.01)(x-1.02)=\\ \\
  x^6+1.689x^5-0.22889x^4-1.6393128x^3\\ \\
  -0.84432446x^2-0.122778036x+0.000123624~.
\end{array}$$

\begin{prop}
  The case $(0,0,4)$ is not realizable with the SAP $\Sigma _{2,4,1}$.
\end{prop}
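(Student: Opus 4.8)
The plan is to argue directly from the coefficients, exploiting that case $(0,0,4)$ forces every $\gamma_i$ to exceed both positive roots. After rescaling the variable by a positive factor (this multiplies each $a_j$ by a positive power of the factor and hence preserves all signs) I may assume $\alpha=1$; I then write $\beta=b$ with $0<b<1$ and, since $(0,0,4)$ means exactly $1=\alpha<\gamma_1<\cdots<\gamma_4$, I put $\gamma_i=1+h_i$ with $h_i>0$. Let $e_1,\dots,e_4$ be the elementary symmetric functions of the $\gamma_i$ and $\sigma_1,\dots,\sigma_4$ those of the $h_i$ (all strictly positive); a direct expansion gives $e_1=4+\sigma_1$, $e_2=6+3\sigma_1+\sigma_2$, $e_3=4+3\sigma_1+2\sigma_2+\sigma_3$ and $e_4=1+\sigma_1+\sigma_2+\sigma_3+\sigma_4$. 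Writing $P=(x-1)(x-b)\prod_{i=1}^4(x+\gamma_i)$ one finds $a_5=e_1-(1+b)>0$ and $a_0=b\,e_4>0$ automatically, so the whole force of $\Sigma_{2,4,1}$ sits in $a_4,a_3,a_2,a_1<0$. I claim that already the two outer conditions $a_4<0$ and $a_1<0$ cannot hold simultaneously in this regime.

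First I would use $a_4<0$. Since $a_4=e_2-(1+b)e_1+b$, the condition reads $2+2\sigma_1+\sigma_2<b(3+\sigma_1)$; because $b<1$ this already forces $\sigma_1+\sigma_2<1$, hence each $h_i\le\sigma_1<1$ and $\sigma_4=\prod h_i<1$, so that $e_3-e_4=3+2\sigma_1+\sigma_2-\sigma_4>2>0$. Thus $a_4<0$ is equivalent to the \emph{lower} bound $b>(e_2-e_1)/(e_1-1)$. Next, from $a_1=b\,e_3-(1+b)e_4$ and $e_3-e_4>0$, the condition $a_1<0$ is equivalent to the \emph{upper} bound $b<e_4/(e_3-e_4)$. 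Consequently it suffices to show that the lower bound exceeds the upper one, i.e.
\[
(e_2-e_1)(e_3-e_4)>e_4(e_1-1),
\]
which in the variables $\sigma_k$ becomes
\[
(2+2\sigma_1+\sigma_2)(3+2\sigma_1+\sigma_2-\sigma_4)>(1+\sigma_1+\sigma_2+\sigma_3+\sigma_4)(3+\sigma_1).
\]

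This displayed inequality is the heart of the matter, and it is exactly the point where hyperbolicity (the reality of the $\gamma_i$, equivalently of the $h_i$) is indispensable: for unconstrained nonnegative $\sigma_k$ it is false, since a large $\sigma_3$ inflates the right-hand side, so one must feed in the Newton--Maclaurin inequalities. From $\sigma_1<1$ these yield $\sigma_2\le\frac{3}{8}\sigma_1^2$, $\sigma_3\le\frac{1}{16}\sigma_1^3$ and $\sigma_4\le\frac{1}{256}\sigma_1^4$, so $\sigma_2,\sigma_3,\sigma_4$ are all uniformly small. A routine estimate then confirms the inequality for every $\sigma_1\in(0,1)$: to leading order the two sides are $6$ and $3$, and the correction terms, once controlled by the Maclaurin bounds, never close this gap (the worst configuration $h_1=\cdots=h_4=\sigma_1/4$ still leaves a wide margin). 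This contradicts the coexistence of $a_4<0$ and $a_1<0$, and the proposition follows.

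I expect the main obstacle to be precisely this last inequality. One must resist bounding too crudely — dropping the $\sigma_1$-terms on the left loses too much as $\sigma_1\to1$ — and one has to use genuinely that $\sigma_3$ is of order $\sigma_1^3$ rather than a free parameter; everything else (the normalization, the passage to the $h_i$, and the translation of $a_4<0$ and $a_1<0$ into a lower and an upper bound for $b$) is bookkeeping. It is worth noting that the argument is special to case $(0,0,4)$: in the realizable cases $(0,b,4-b)$ with $b\ge2$ some $\gamma_i$ is smaller than $\alpha$, the substitution $\gamma_i=\alpha(1+h_i)$ no longer has all $h_i>0$, the Maclaurin input vanishes, and no contradiction arises, consistent with the explicit realizations given above.
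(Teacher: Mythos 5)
Your argument is correct, but it is a genuinely different route from the paper's. The paper's proof is very short: it passes to the derivative $P'$, which defines the SP $\Sigma_{2,4}$, notes via Rolle that the case $(0,0,4)$ would force the positive root of $P'$ to have smaller modulus than at least three of its negative roots, and then invokes Corollary~1 of \cite{Ko1} (for $d=5$ only the cases $(3,1)$ and $(2,2)$ are realizable with $\Sigma_{2,4}$) to get a contradiction. You instead work directly with the coefficients: after normalizing $\alpha=1$ you translate $a_4<0$ and $a_1<0$ into a lower and an upper bound for $\beta$ and show these bounds are incompatible via the inequality $(e_2-e_1)(e_3-e_4)>e_4(e_1-1)$, controlled by the Maclaurin inequalities $\sigma_2\le\tfrac38\sigma_1^2$, $\sigma_3\le\tfrac1{16}\sigma_1^3$, $\sigma_4\le\tfrac1{256}\sigma_1^4$ together with $\sigma_1<1$ (which you correctly extract from $a_4<0$ and $\beta<1$). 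I checked the expansion: the difference of the two sides equals $3+6\sigma_1+2\sigma_2+3\sigma_1^2+3\sigma_1\sigma_2+\sigma_2^2-3\sigma_3-\sigma_1\sigma_3-5\sigma_4-3\sigma_1\sigma_4-\sigma_2\sigma_4$, and under your bounds the negative terms sum to less than $0.3$ against the constant term $3$, so the ``routine estimate'' you defer does indeed close; you should write it out rather than assert it, since it is the whole point. Your approach buys self-containedness (no appeal to the degree-$5$ classification) and the economical observation that only the two outer sign conditions $a_4<0$, $a_1<0$ are needed; the paper's approach buys brevity and reuses machinery already set up for the other propositions. Your closing remark on why the argument breaks down for the realizable cases $(0,b,4-b)$, $b\ge2$, is apt.
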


\begin{proof}
  Indeed, suppose that the polynomial $P$ realizes the SAP $\Sigma _{2,4,1}$ in
  the case $(0,0,4)$. Hence $\zeta _4<0<\zeta _5$ and 
  $-\xi _4=\gamma _1>\alpha =\xi _6$, see Notation~\ref{notaproofs}.
  This means that

  \begin{equation}\label{eqcontra}
    -\zeta _1>-\zeta _2>-\zeta _3>-\xi _4>\xi _6>\zeta _5~.
  \end{equation}
  The HP $P'$ defines the SP $\Sigma _{2,4}$. It follows from
  \cite[Corollary~1]{Ko1}
  that for $d=5$, the cases $(3,1)$ and $(2,2)$ are realizable with the SAP
  $\Sigma _{2,4}$, but the cases $(1,3)$ and $(0,4)$ are not. This is a
  contradiction with~(\ref{eqcontra}).
  \end{proof}

\begin{prop}
  The case $(0,1,3)$ is not realizable with the SP $\Sigma _{2,4,1}$.
  \end{prop}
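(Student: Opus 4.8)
The plan is to bypass derivatives entirely and argue directly on the coefficients. Writing the HP as $P(x)=(x^2-sx+p)\prod_{i=1}^4(x+\gamma_i)$ with $s=\alpha+\beta>0$, $p=\alpha\beta>0$, and $\prod_{i=1}^4(x+\gamma_i)=x^4+\sigma_1x^3+\sigma_2x^2+\sigma_3x+\sigma_4$, I would extract from the SP $\Sigma_{2,4,1}$ only the two coefficient conditions that turn out to clash. The coefficient of $x^4$ equals $\sigma_2-s\sigma_1+p$ and must be negative, giving $p<s\sigma_1-\sigma_2$; the coefficient of $x^1$ equals $p\sigma_3-s\sigma_4$ and must be negative, giving $p\sigma_3<s\sigma_4$. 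The remaining sign conditions will not be needed, since to disprove realizability it suffices to contradict a subset of them.

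Next I would encode the case. In $(0,1,3)$ both positive roots lie below $\gamma_2$ (indeed $\alpha<\gamma_2$ and $\beta<\gamma_1<\gamma_2$), so $(\gamma_2-\alpha)(\gamma_2-\beta)>0$, i.e.\ $p>\gamma_2(s-\gamma_2)$. Since $\sigma_4/\sigma_3=\bigl(\sum_i\gamma_i^{-1}\bigr)^{-1}<\gamma_1<\gamma_2$, all denominators below are positive, and feeding this lower bound for $p$ into the two upper bounds forces any admissible $s$ to satisfy
\[
L:=\frac{\sigma_2-\gamma_2^2}{\sigma_1-\gamma_2}\;<\;s\;<\;\frac{\gamma_2^2\sigma_3}{\gamma_2\sigma_3-\sigma_4}=:U .
\]
Thus the entire question collapses to the purely symmetric inequality $L\ge U$, which leaves no room for $s$ and yields the contradiction.

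The hard part is precisely this inequality $L\ge U$, equivalently $(\sigma_2-\gamma_2^2)(\gamma_2\sigma_3-\sigma_4)\ge\gamma_2^2\sigma_3(\sigma_1-\gamma_2)$. I would simplify it by treating $\gamma_2$ as distinguished and introducing the elementary symmetric functions $\tau_1,\tau_2,\tau_3$ of the three remaining moduli $\gamma_1,\gamma_3,\gamma_4$. The identities $\sigma_1-\gamma_2=\tau_1$, $\gamma_2\sigma_3-\sigma_4=\gamma_2^2\tau_2$, $\sigma_3=\gamma_2\tau_2+\tau_3$ and $\sigma_2-\gamma_2^2=\gamma_2(\tau_1-\gamma_2)+\tau_2$ reduce the target, after cancelling $\gamma_2^2$, to $\tau_2^2-\tau_1\tau_3\ge\gamma_2^2\tau_2$. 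Because $\gamma_2<\gamma_3$ it suffices to prove the stronger statement with $\gamma_2$ replaced by $\gamma_3$; writing $(a,b,c)=(\gamma_1,\gamma_3,\gamma_4)$ and using $\tau_2^2-\tau_1\tau_3=a^2b^2+a^2c^2+b^2c^2+abc(a+b+c)$, the inequality becomes a quadratic in $c$ with positive leading coefficient whose value at $c=b$ is $3a^2b^2>0$ and whose vertex lies at $c<b$; hence it is positive for all $c>b$. This closes the argument, and the substitution by $\tau_1,\tau_2,\tau_3$ is exactly what makes an otherwise unpleasant four–variable expansion elementary.

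Finally I would observe that the argument uses the case only through $\alpha<\gamma_2$, so these same two lines simultaneously re-prove the non-realizability of $(0,0,4)$ and show that every realization of $\Sigma_{2,4,1}$ must have $\alpha>\gamma_2$; combined with $\beta<\gamma_1$ this recovers precisely the realizable list $(0,2,2)$, $(0,3,1)$, $(0,4,0)$ produced by the examples above.
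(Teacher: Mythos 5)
Your proof is correct, and it takes a genuinely different route from the paper. The paper proves non-realizability of $(0,1,3)$ by a multi-stage deformation argument: it pushes the hyperbolic polynomial through several one-parameter families that preserve the sign pattern until two moduli coincide or two roots coalesce, and then rules out the resulting boundary configuration $(x+g)^2(x^2-1)(x+B)(x-A)$ by an explicit sign computation on one coefficient; the companion case $(0,0,4)$ is handled separately via $P'$ and a degree-$5$ result from \cite{Ko1}. You instead work directly on the coefficients: writing $P=(x^2-sx+p)\prod(x+\gamma_i)$, using only the two sign conditions $a_4=\sigma_2-s\sigma_1+p<0$ and $a_1=p\sigma_3-s\sigma_4<0$ together with $p>\gamma_2(s-\gamma_2)$ (which encodes $\alpha<\gamma_2$), you trap $s$ in an interval $(L,U)$ and show $L>U$. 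I checked the reductions: $\sigma_1-\gamma_2=\tau_1$, $\gamma_2\sigma_3-\sigma_4=\gamma_2^2\tau_2$, $\sigma_3=\gamma_2\tau_2+\tau_3$, $\sigma_2-\gamma_2^2=\gamma_2(\tau_1-\gamma_2)+\tau_2$ do collapse $L\ge U$ to $\tau_2^2-\tau_1\tau_3\ge\gamma_2^2\tau_2$; the identity $\tau_2^2-\tau_1\tau_3=a^2b^2+a^2c^2+b^2c^2+abc(a+b+c)$ is right; and the quadratic $f(c)=(a^2+ab+b^2)c^2+(a^2b-b^3)c+(a^2b^2-ab^3)$ indeed has $f(b)=3a^2b^2>0$ and vertex at $b(b^2-a^2)/\bigl(2(a^2+ab+b^2)\bigr)<b$, so $f(c)>0$ for $c>b$. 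Your approach buys several things: it is self-contained (no appeal to \cite{Ko1} or to Rolle's theorem), it disposes of $(0,0,4)$ and $(0,1,3)$ simultaneously since only $\alpha<\gamma_2$ is used, and it isolates exactly which two sign conditions are responsible for the obstruction; the paper's deformation method, by contrast, is the uniform technique applied across all the sign patterns in the section and generalizes more readily to the other cases treated there.
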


\begin{proof}
  Suppose that the HP $P$ realizes the SP $\Sigma _{2,4,1}$ in the case
  $(0,1,3)$.
We use Notation~\ref{notaproofs}. Consider for $t\geq 0$
the one-parameter deformation $P_t:=P+tx^4(x-\xi _6)$. 
As $t$ increases, the root
$\xi _6$ does not change, $\xi _1$, $\xi _3$ and $\xi _5$ decrease while
$\xi _2$ and $\xi _4$ increase; the SP does not change. For some value $t_0>0$
of $t$, at least one of the two things happens:
\vspace{1mm}

A) the roots $\xi _2$ and $\xi _3$ coalesce;

B) one has $|\xi _4|=|\xi _5|$, i.e. $-\xi _4=\xi _5$.
\vspace{1mm}

Set $Q:=P_{t_0}$. If A) takes place, then we consider the one-parameter
deformation $Q_s:=Q-s(x-\xi _2)^2x^2$, $s\geq 0$. As $s$ increases, the
double root
$\xi _2=\xi _3$ does not change, $\xi _1$ and $\xi _5$ decrease while
$\xi _4$ and $\xi _6$ increase; the SP does not change. Then for some
$s=s_0>0$, either B) or C) takes place, with
\vspace{1mm}

C) one has $|\xi _6|=|\xi _2|$, i.e. $-\xi _2=\xi _6$.
\vspace{1mm}

We denote by AB) ``A) followed by B)''. Thus if A) takes place, then there
exists a HP defining the SP $\Sigma _{2,4,1}$ for which either
AB) or AC) takes place.

Suppose that B) takes place. Consider the one-parameter deformation
$Q_u:=Q+uQ^*$, $Q^*:=(x^2-\xi _4^2)(x-\xi _6)$, $u\geq 0$. The roots
$\xi _4$, $\xi _5$ and
$\xi _6$ do not change, $\xi _1$ and $\xi _3$ decrease while $\xi _2$ increases.
One has

$$Q^*=x^3-\xi _6x^2-(\xi _4)^2x+\xi _4^2\xi _6=x^3-U_2x^2-U_1x+U_0~~~,~~~U_j>0~.$$
Hence as $u$ increases, the signs of the last three coefficients of $Q_u$
do not change and the first three coefficients do not change at all.
The coefficient of
$x^3$ cannot become negative. Indeed, $Q_u$ is hyperbolic and four sign
changes in the sequence of its coefficients means four positive roots which
is not the case. Thus $u$ can be increased only until for some value $u_0$,
A) takes place, so one can assume that either AB) (or BA) which is the same)
or AC) takes place.

Set $R:=Q_{u_0}$. Suppose that AB) takes place.
Consider the one-parameter deformation
$R_v:=R-vR^*$, $R^*:=(x-\xi _2)^2(x^2-\xi _4^2)$, $v\geq 0$. One has

$$R^*=x^4-2\xi _2x^3+(\xi _2^2-\xi _4^2)x^2+2\xi _2\xi _4^2x-\xi _2^2\xi _4^2=
x^4+R_3x^3+R_2x^2-R_1x-R_0~~~,~~~R_j>0~.$$
Thus in the deformation $R_v$ only the sign of the linear term can change.
The roots $\xi _i$, $2\leq i\leq 5$, do not change, $\xi _1$ decreases and
$\xi _6$ increases, so for some value $v_0>0$ either C) or D) takes place,
with
\vspace{1mm}

D) the coefficient of $x$ equals $0$.
\vspace{1mm}

Hence if AB) takes place, then it suffices to consider the possibility
ABC) or ABD) to take place.

Suppose that AC) takes place.
Consider the deformation $R_r:=R+rR^{\dagger}$,
$r\geq 0$,

$$R^{\dagger}:=(x+\xi _6)^2(x-\xi _6)=x^3+\xi _6x^2-\xi _6^2x-\xi _6^3~.$$
The roots $\xi _2=\xi _3$ and $\xi _6$ do not change, $\xi _1$ and $\xi _5$
decrease while $\xi _4$ increases. In the deformation $R_r$ the coefficient of
$x^3$ or the one of $x^2$ or both of them cannot become positive,
because this would mean at least
three sign changes, i.e. at least three positive roots which is impossible.
The constant term of $R_r$ cannot vanish. Indeed, in this case after rescaling
one can set $\xi _6=1$ hence $R_r=xT$, where

$$T=(x+1)^2(x-1)(x+g)(x+h)~~~,~~~g=-\xi _1>1~~~,~~~h=-\xi _4\in (0,1)~.$$
Observe that the SP $\Sigma _{2,4,1}$ implies that $R_r'(0)<0$ hence it is
$\xi _5$ and not $\xi _4$ that vanishes. The coefficient of $x^3$ in $T$
equals $-1+g+h+gh>0$ whereas it
must be negative. However, as $R^{\dagger}(0)<0$, for
$r=-R(0)/R^{\dagger}(0)>0$, one has $R_r(0)=0$, i.e. for $r=r_0$, the constant
term does vanish. This contradiction shows that
one cannot have AC). So one cannot have ABC) either and only the possibility
ABD) remains. 

We rescale the variable $x$ so that $\xi _4=-1=-\xi _5$. We set
$\xi _2=\xi _3=-g$, $g>1$, $\xi _6=A>1$, $\xi _1=-B$, $B>1$. So we consider the
polynomial

$$F:=(x+g)^2(x^2-1)(x+B)(x-A)=x^6+F_5x^5+\cdots +F_0~.$$
As $F_1=g(-gB+gA+2AB)$, the condition $F_1=0$ yields $g=g_*:=2AB/(B-A)$.
Hence $B>A$, because $g>0$. 
One gets

$$F_4|_{g=g_*}=(-2A^2B^2-B^2+2AB-A^2+3AB^3+3A^3B)/(-B+A)^2~.$$
However the inequalities $B>A>1$ imply $2AB^3>2A^2B^2$, $AB^3>B^2$ and
$AB>A^2$, i.e. $F_4|_{g=g_*}>0$ which is in contradiction with the SP
$\Sigma _{2,4,1}$. This contradiction proves the proposition.

\end{proof}

\subsection{The SP $\Sigma _{3,3,1}$}

According to part (2) of Theorem~\ref{tmknown}, the HPs realizing this SP
either satisfy the inequalities
$\gamma _1<\beta <\alpha <\gamma _2<\gamma _3<\gamma _4$,
which is the case $(1,0,3)$ realizable by the HP

$$\begin{array}{l}
  (x+0.98)(x-0.99)(x-1)(x+2.05)(x+2.1)(x+40)=\\ \\
  
  x^6+43.14x^5+124.7533x^4-41.23068x^3\\ \\
  -294.614531x^2-0.116529x+167.06844~,
\end{array}$$
or they realize one of the cases $(0,b,4-b)$, $0\leq b\leq 4$. These cases
are also realizable: 

$$\begin{array}{l}
  (0,0,4)~~~:~~~
  (x-0.1)(x-9)(x+9.6)(x+9.7)(x+9.8)(x+9.9)=\\ \\
  x^6+29.9x^5+216.35x^4-1448.135x^3\\ \\ -24185.4276x^2-78877.71684x+
  8131.05216~,\\ \\ 
  (0,1,3)~~~:~~~
  (x-0.1)(x+0.99)(x-1)(x+1.01)(x+1.02)(x+40)=\\ \\
  x^6+41.92x^5+76.6179x^4-9.305992x^3-81.6975778x^2-32.6139222x+4.079592~,\\ \\
  (0,2,2)~~~:~~~ 
  (x-0.1)(x+0.99)(x+0.995)(x-1)(x+1.02)(x+40)=\\ \\
  x^6+41.905x^5+76.00425x^4-9.835474x^3\\ \\ -81.0232111x^2-32.0695689x+
  4.019004~,\\ \\
  (0,3,1)~~~:~~~
  (x-0.1)(x+0.99)(x+0.995)(x+0.999)(x-1)(x+40)=\\ \\
  x^6+41.884x^5+75.145665x^4-10.55580655x^3\\ \\ -80.08192694x^2-31.3281913x+
  3.9362598~,\\ \\
  (0,4,0)~~~:~~~
  (x-0.1)(x+9.6)(x+9.7)(x+9.8)(x+9.9)(x-10)=\\ \\
  x^6+28.9x^5+177.45x^4-2014.585x^3\\ \\ -27835.3426x^2-87541.52424x+9034.5024~.
\end{array}$$

\subsection{The SP $\Sigma _{4,2,1}$}

For the HPs realizing this SP there exist two possibilities, see
Theorem~\ref{tmknown}.
The first of them is to satisfy the inequalities
$\gamma _1<\beta <\alpha <\gamma _2<\gamma _3<\gamma _4$, and this 
is the case $(1,0,3)$ realizable by the polynomial

$$\begin{array}{l}
  (x+1)(x-1.5)(x-1.6)(x+10)(x+11)(x+12)=\\ \\
  
  x^6+30.9x^5+292x^4+539.1x^3-2946.2x^2-55.2x+3168~.
\end{array}$$
The second is one of the cases $(0,b,4-b)$, $0\leq b\leq 4$,
to be realizable. For $b=0$, $1$ and $2$ we provide examples of HPs
realizing the corresponding cases:

$$\begin{array}{l}
  (0,0,4)~~~:~~~(x-1)(x-4)(x+5)(x+6)(x+100)(x+101)=\\ \\
  x^6+207x^5+11285x^4+56273x^3-233286x^2-1046480x+1212000~,\\ \\
  (0,1,3)~~~:~~~(x-1)(x+2)(x-4)(x+5)(x+100)(x+101)=\\ \\
  x^6+203x^5+10481x^4+15957x^3-216482x^2-214160x+404000~,\\ \\
  (0,2,2)~~~:~~~(x-1)(x+2.1)(x+3)(x-4)(x+1000)(x+1001)=\\ \\
  x^6+2001.1x^5+1.0011849\times
  10^6x^4+69673.7x^3-1.52373859\times 10^7x^2\\ \\
  -1.10606748\times 10^7x+
  2.52252\times 10^7~.
  \end{array}$$

\begin{prop}
  The case $(0,4,0)$ is not realizable with the SP $\Sigma _{4,2,1}$.
\end{prop}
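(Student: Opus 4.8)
The plan is to peel off the smallest root $\beta=\xi _5$ and reduce the statement to a non-realizability result for degree $5$ and $c=1$, of exactly the kind already invoked in the treatment of the case $(0,0,4)$. Assume that a HP $P$ realizes $\Sigma _{4,2,1}$ in the case $(0,4,0)$, so that by Notation~\ref{notaproofs}
\[
\xi _5<-\xi _4<-\xi _3<-\xi _2<-\xi _1<\xi _6 ,
\]
i.e. $\beta$ has the smallest and $\alpha =\xi _6$ the largest modulus among the six roots. I would then write $P=(x-\beta )Q$, where $Q:=(x-\xi _6)\prod _{i=1}^{4}(x-\xi _i)$ is a degree $5$ HP with the four negative roots $\xi _1,\dots ,\xi _4$ and the single positive root $\xi _6=\alpha$. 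Since $\alpha$ exceeds every $-\xi _i$, the polynomial $Q$ realizes the case $(4,0)$ of its sign pattern.

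First I would read off the sign pattern of $Q$. Writing $Q=x^5+q_4x^4+q_3x^3+q_2x^2+q_1x+q_0$, comparison of coefficients in $P=(x-\beta )Q$ gives $a_5=q_4-\beta$, $a_4=q_3-\beta q_4$, $a_3=q_2-\beta q_3$ and $a_0=-\beta q_0$. Because $\beta >0$ and $\Sigma _{4,2,1}$ forces $a_5,a_4,a_3>0$ and $a_0>0$, one obtains successively $q_4>0$, then $q_3>0$, then $q_2>0$, and finally $q_0<0$ (the last is anyway immediate from $q_0=Q(0)=-\alpha \prod (-\xi _i)$). Thus the first four coefficients of $Q$ are positive and its constant term is negative, so the sign pattern of $Q$ is $\Sigma _{5,1}$ if $q_1>0$ and $\Sigma _{4,2}$ if $q_1<0$.

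The contradiction then comes from the degree $5$ theory. By Theorem~\ref{tmknown}(1) the pattern $\Sigma _{5,1}$ is realizable only in its canonical case $(0,4)$, hence not in $(4,0)$; and by \cite[Corollary~1]{Ko1}, together with the reversion of Remark~\ref{remrevert} (which exchanges $\Sigma _{2,4}$ in $(0,4)$ with $\Sigma _{4,2}$ in $(4,0)$), the pattern $\Sigma _{4,2}$ is not realizable in $(4,0)$ either. In either case $Q$ realizes a forbidden case, which is the desired contradiction. The one genuinely delicate point I expect is the degenerate possibility $q_1=0$, where the sign pattern of $Q$ is not defined; I would dispose of it by noting that realizing $\Sigma _{4,2,1}$ in $(0,4,0)$ is an open condition on the roots, so a nonempty realization set would contain a polynomial with $q_1\neq 0$, to which the argument above applies verbatim.
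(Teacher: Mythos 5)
Your proof is correct, but it takes a genuinely different route from the paper's. The paper passes to the reverted polynomial $P^R$ (which realizes $\Sigma_{1,2,4}$ in the case $(0,4,0)$), takes its derivative, reverts once more, and derives the contradiction from the fact that for $d=5$ the case $(0,3,0)$ is not realizable with the SP $\Sigma_{3,2,1}$ (\cite[Proposition~1]{Ko1} combined with part (2) of Theorem~\ref{tmknown}). You instead divide out the smallest-modulus root $\beta$ and reduce directly to degree~$5$, $c=1$ statements: $\Sigma_{5,1}$ is realizable only canonically by part (1) of Theorem~\ref{tmknown}, and $\Sigma_{4,2}$ is not realizable in the case $(4,0)$ by \cite[Corollary~1]{Ko1} together with reversion --- the same corollary the paper already invokes when treating $\Sigma_{2,4,1}$. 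Your coefficient bookkeeping for $Q=P/(x-\beta)$ is right, and the argument avoids derivatives altogether, which is arguably more elementary. Two small remarks. First, the step ``open condition, hence some realization has $q_1\neq 0$'' tacitly needs the observation that $q_1$ is not locally constant on the realization set; this is immediate since $\partial q_1/\partial \xi_6=e_3(\xi_1,\dots,\xi_4)<0$. Second, the case distinction is in fact unnecessary: writing $q_1=e_4(\xi_1,\dots,\xi_4,\xi_6)=\gamma_1\gamma_2\gamma_3\gamma_4-\alpha(\gamma_1\gamma_2\gamma_3+\gamma_1\gamma_2\gamma_4+\gamma_1\gamma_3\gamma_4+\gamma_2\gamma_3\gamma_4)$ and using $\alpha>\gamma_4$, the term $\alpha\gamma_1\gamma_2\gamma_3$ alone already exceeds $\gamma_1\gamma_2\gamma_3\gamma_4$, so $q_1<0$ and only the $\Sigma_{4,2}$ branch ever occurs.
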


\begin{proof}
  Suppose that the SP $\Sigma _{4,2,1}$ is realizable in the case $(0,4,0)$ by
  the HP $P$. Then the SP $\Sigma _{1,2,4}$ is realizable in the case $(0,4,0)$
  by the HP $Q:=P^R$. For the roots of $Q$ and $Q'$ we use
  Notation~\ref{notaproofs}. 
  The HP $Q'$ defines the SP $\Sigma _{1,2,3}$ and as
  $|\xi _{\nu}|\in (\xi _5,\xi _6)$, $1\leq \nu \leq 4$, one has
  $|\zeta _k|>\zeta _4$, $k=1$, $2$ and $3$. One has $\zeta _5>\zeta _4$;
  the position of $\zeta _5$ w.r.t. $|\zeta _k|$, $1\leq k\leq 3$,
  cannot be specified. 

  The HP $(Q')^R$ defines the SP $\Sigma _{3,2,1}$ and has roots
  $\eta _j=1/\zeta _j$ for which one has
  $|\eta _k|<\eta _4$ and $\eta _k<0$, $k=1$, $2$ and $3$. Notice that $\eta _4$
  is the largest root $\eta _j$. Indeed, there are two positive roots
  $\eta _4$ and $\eta _5$, and as $\zeta _5>\zeta _4$, this implies
  $\eta _5<\eta _4$. 
  From part (2) of Theorem~\ref{tmknown} and from \cite[Proposition~1]{Ko1}
  (which claims that for $d=5$, the case $(0,3,0)$ is not realizable with
  the SP $\Sigma _{3,2,1}$) one deduces that this is impossible.
  \end{proof}

\begin{prop}
  The case $(0,3,1)$ is not realizable with the SP $\Sigma _{4,2,1}$.
\end{prop}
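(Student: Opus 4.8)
The plan is to bypass deformations and instead factor off the outermost root, reducing the whole question to a single Newton-inequality estimate for the quotient. Suppose a HP $P$ realizes $\Sigma_{4,2,1}$ in the case $(0,3,1)$; using Notation~\ref{notaproofs} its roots satisfy $\xi_1<\xi_2<\xi_3<\xi_4<0<\xi_5<\xi_6$ with
$$\beta=\xi_5<\gamma_1<\gamma_2<\gamma_3<\alpha=\xi_6<\gamma_4=|\xi_1|,$$
where $\gamma_1=|\xi_4|$, $\gamma_2=|\xi_3|$, $\gamma_3=|\xi_2|$. Since $\xi_1=-\gamma_4$ has the largest modulus of all roots, I would set $G:=P/(x+\gamma_4)$, a monic hyperbolic polynomial of degree $5$ with roots $\beta,\alpha,-\gamma_1,-\gamma_2,-\gamma_3$, so that $\beta<\gamma_1<\gamma_2<\gamma_3<\alpha$. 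Writing $G=\sum_{k=0}^5 g_kx^k$, the coefficient $g_3$ equals the second elementary symmetric function $e_2$ of these five roots.

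The first step is the key sign $g_3<0$. Indeed
$$g_3=e_2=\alpha\beta-(\alpha+\beta)(\gamma_1+\gamma_2+\gamma_3)+(\gamma_1\gamma_2+\gamma_1\gamma_3+\gamma_2\gamma_3),$$
which is affine in $\alpha$ with slope $\beta-(\gamma_1+\gamma_2+\gamma_3)<0$ (because $\gamma_1>\beta$); hence on the admissible range $\alpha>\gamma_3$ it is largest at $\alpha=\gamma_3$, where it reduces to $-(\gamma_3^2-\gamma_1\gamma_2)-\beta(\gamma_1+\gamma_2)<0$, both terms being negative since $\gamma_3>\gamma_1,\gamma_2$. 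Thus $g_3<0$ throughout. Now $P=(x+\gamma_4)G$ gives $a_k=g_{k-1}+\gamma_4g_k$ for the coefficients of $P$, and $\Sigma_{4,2,1}$ requires $a_3>0$ and $a_4>0$. Combined with $g_3<0$, the relation $a_3=g_2+\gamma_4g_3>0$ forces $g_2>\gamma_4|g_3|>0$, while $a_4=g_3+\gamma_4g_4>0$ forces $g_4>|g_3|/\gamma_4>0$. Eliminating $\gamma_4$ from $|g_3|/g_4<\gamma_4<g_2/|g_3|$ then yields $g_3^2<g_2g_4$, with $g_2g_4>0$.

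The contradiction comes from Newton's inequalities applied to the real-rooted $G$: at the middle index $k=3$ (legitimate since $\deg G=5$) they give $g_3^2\ge\frac{\binom{5}{3}^2}{\binom{5}{2}\binom{5}{4}}\,g_2g_4=2\,g_2g_4$. As $g_2g_4>0$, this means $g_3^2\ge 2g_2g_4>g_2g_4$, contradicting $g_3^2<g_2g_4$ from the previous step. Hence no HP realizes $\Sigma_{4,2,1}$ in the case $(0,3,1)$. I expect the only delicate point to be the estimate $g_3<0$; once it is established, the two positivity conditions $a_3,a_4>0$ automatically squeeze $\gamma_4$ into an interval whose mere nonemptiness is forbidden by Newton's inequality, so neither a case analysis nor a chain of deformations is required.
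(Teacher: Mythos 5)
Your argument is correct, and it is genuinely different from the paper's. The paper proves this proposition by a chain of one-parameter deformations (adding multiples of auxiliary polynomials whose sign patterns keep $\Sigma_{4,2,1}$ intact) that push the configuration to boundary situations A)--E) with a double root or an equality of moduli, and then checks by hand that in each degenerate configuration the coefficient of $x^3$ has the wrong sign. You instead factor off the root of largest modulus, $P=(x+\gamma_4)G$, observe that the ordering $\beta<\gamma_1<\gamma_2<\gamma_3<\alpha$ forces $g_3=e_2<0$ (your monotonicity-in-$\alpha$ computation is right: the value at $\alpha=\gamma_3$ is $-(\gamma_3^2-\gamma_1\gamma_2)-\beta(\gamma_1+\gamma_2)<0$), and then the two sign conditions $a_3>0$, $a_4>0$ of $\Sigma_{4,2,1}$ pin $\gamma_4$ into the interval $\left(|g_3|/g_4,\, g_2/|g_3|\right)$, whose nonemptiness requires $g_3^2<g_2g_4$; Newton's inequality for the real-rooted quintic $G$ gives $g_3^2\ge 2g_2g_4$, and since $g_2,g_4>0$ are also forced, this is a contradiction. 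I checked the binomial constant ($\binom{5}{3}^2/(\binom{5}{2}\binom{5}{4})=2$) and the sign bookkeeping $g_3=e_2$, $g_2g_4=e_1e_3$; both are right. The one point you should make explicit is that you are using Newton's inequalities in their general form, valid for arbitrary real roots and not only for positive ones -- the roots of $G$ here have mixed signs -- but that general form is classical and true, so there is no gap. What your route buys is brevity and the absence of any case analysis or limiting configuration; what the paper's route buys is uniformity, since the same deformation machinery handles the neighbouring propositions (e.g.\ $(0,1,3)$ for $\Sigma_{2,4,1}$ and the $\Sigma_{3,2,2}$ cases) where no single clean inequality of Newton type seems to be available.
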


\begin{proof}
  Suppose that the HP $P$ realizes the SP $\Sigma _{4,2,1}$ in the
  case $(0,3,1)$. We use Notation~\ref{notaproofs}. Consider the polynomial
  $V:=x^3(x-\xi _2)(x-\xi _3)(x-\xi _4)$. It defines the SP $(+,+,+,+,0,0,0)$.
  Hence throughout the deformation $P_t:=P+tV$, $t\geq 0$, the SP does not
  change, the roots $\xi _2$, $\xi _3$ and $\xi _4$ remain the same, $\xi _1$
  and $\xi _5$ increase while $\xi _6$ decreases. Hence for some $t=t_0>0$,
  at least one of the following things happens:
  \vspace{1mm}

  A) one has $|\xi _4|=|\xi _5|$, i.e. $-\xi _4=\xi _5$;

  B) one has $|\xi _1|=|\xi _6|$, i.e. $-\xi _1=\xi _6$;

  C) one has $|\xi _2|=|\xi _6|$, i.e. $-\xi _2=\xi _6$.
  \vspace{1mm}

  Suppose that A) takes place. Set $Q:=P_{t_0}$ and consider the deformation
  $Q_s:=Q+sQ^{\triangle}$, $Q^{\triangle}:=(x^2-\xi _5^2)(x^2-\xi _6^2)$, $s\geq 0$.
  The SP defined by the polynomial $Q^{\triangle}$ is $(0,0,+,0,-,0,+)$, so the
  SP does not change throughout the deformation $Q_s$. The roots $\xi _4$,
  $\xi _5$ and $\xi _6$ do not change; $\xi _1$ increases without reaching
  $-\xi _6$; $\xi _2$ increases and $\xi _3$ decreases. Hence for some
  $s=s_0>0$, one has
  \vspace{1mm}

  D) the roots $\xi _2$ and $\xi _3$ coalesce.
  \vspace{1mm} 

  By rescaling the $x$-axis one obtains the condition $\xi _2=\xi _3=-1$. Hence
  one can set $-\xi _4=\xi _5=g\in (0,1)$, $\xi _6=A>1$ and $-\xi _1=B>A$. The
  corresponding polynomial equals $K:=(x+1)^2(x^2-g^2)(x+B)(x-A)$.
  Its coefficient of $x^3$ is

  $$-2g^2+g^2A-A-Bg^2+B-2AB=-2g^2-(1-g^2)A-Bg^2+(1-A)B-AB<0$$
  which contradicts the SP $\Sigma _{4,2,1}$.

  Suppose that B) takes place. Consider the deformation $Q_v:=Q+vQ^{\dagger}$,
  $Q^{\dagger}:=(x^2-\xi _6^2)x^2$, $v\geq 0$. The polynomial $Q^{\dagger}$ defines
  the SP $(0,0,+,0,-,0,0)$, so the SP does not change throughout the
  deformation $Q_v$. The roots $\xi _1$ and $\xi _6$ do not change. The root
  $\xi _4$ increases while $\xi _5$ decreases both keeping away from~$0$
  (because $Q_v(0)$ does not change). As $\xi _2$ increases and $\xi _3$
  decreases, for some $v=v_0>0$, one has either A)
  (a case already considered) or D).

  If B) and D) take place, then by rescaling the $x$-axis so that
  $\xi _2=\xi _3=-1$, one sets
  $-\xi _1=\xi _6=A>1$ and $-\xi _4=g\in (0,1)\ni h=\xi _5$. The corresponding
  polynomial equals $(x+1)^2(x^2-A^2)(x-h)(x+g)$, with coefficient of $x^3$
  equal to

  $$-2A^2-A^2g+g+hA^2-h-2gh=(h-1)A^2+(g-A^2)-A^2g-h-2gh<0$$
  which contradicts the SP $\Sigma _{4,2,1}$.

  Suppose that C) takes place. Consider the family of polynomials
  $Q+\delta Q^{\bullet}$, where $\delta \geq 0$ and
  $Q^{\bullet}:=(x^2-\xi _6^2)(x^2-\xi _5^2)$. The
  polynomial $Q^{\bullet}$ defines the SP $(0,0,+,0,-,0,+)$, so the SP does not
  change throughout the deformation. The roots $\xi _2$, $\xi _5$ and $\xi _6$
  do not change, $\xi _1$ and
  $\xi _4$ increase ($\xi _4$ never becomes equal to $-\xi _5$) while $\xi _3$
  decreases. For some $\delta =\delta _0>0$, takes place either D) or

  E) the roots $\xi _1$ and $\xi _2$ coalesce.

  If E) takes place, then after rescaling one obtains for
  $\xi _1=-1=\xi _2=-\xi _6$ the polynomial

  $$G:=(x+1)^2(x-1)(x+g)(x+h)(x-k)~~~,~~~k=\xi _5~~~,~~~g=-\xi _3~~~,~~~
  h=-\xi _4~,$$
  hence $0<k<h<g<1$. The coefficient of $x^3$ equals

  $$-1-g-h+gh+k-kg-kh-ghk=-1-g(1-h)-(h-k)-kg-kh-ghk<0$$
  which contradicts the SP $\Sigma _{4,2,1}$. If D) takes place, then we rescale
  the $x$-axis to obtain the polynomial $G$ with the equalities
  $\xi _2=-1=\xi _3=-\xi _6$, $k=\xi _5$,
  $g=-\xi _1$, $h=-\xi _4$, hence $0<k<h<1<g$. With the same form of the
  coefficient of $x^3$ one concludes that the negative sign of this coefficient
  contradicts the SP $\Sigma _{4,2,1}$.

  The proposition is proved.
  \end{proof}
  
  \subsection{The SP $\Sigma _{2,3,2}$}

  \begin{prop}\label{prop232}
    All cases $(a,b,w)$, $a+b+w=4$, $0\leq a$, $b$, $w$ $\leq 4$, are realizable
    with the SP $\Sigma _{2,3,2}$.
  \end{prop}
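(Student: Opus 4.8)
The plan is to combine the reversion symmetry of $\Sigma_{2,3,2}$ with a short list of explicit hyperbolic polynomials. Since here $m=q=2$, one has $\Sigma_{q,n,m}=\Sigma_{2,3,2}$, so this SP is invariant under reversion; by Remark~\ref{remrevert} the case $(a,b,w)$ is realizable with $\Sigma_{2,3,2}$ if and only if $(w,b,a)$ is. The involution $(a,b,w)\mapsto(w,b,a)$ on the fifteen triples with $a+b+w=4$ fixes exactly $(0,4,0)$, $(1,2,1)$ and $(2,0,2)$ and pairs up the remaining twelve. Hence it suffices to realize one case from each orbit, for which I would take the representatives $(0,0,4)$, $(0,1,3)$, $(0,2,2)$, $(0,3,1)$, $(1,0,3)$, $(1,1,2)$ together with the three fixed triples. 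The case $(1,2,1)$ costs nothing: it is the canonical arrangement $(q-1,n-1,m-1)$ of $\Sigma_{2,3,2}$, hence realizable by part~(1) of Theorem~\ref{tmknown}. This leaves eight cases to be exhibited by example.

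For each of these I would write the candidate HP as $P=(x-\alpha)(x-\beta)\prod_{i=1}^{4}(x+\gamma_i)$ with $0<\beta<\alpha$ and $0<\gamma_1<\cdots<\gamma_4$ placed so that $\beta$ and $\alpha$ fall in the prescribed gaps, and then check the six interior signs directly. Writing $P=x^6-e_1x^5+e_2x^4-e_3x^3+e_4x^2-e_5x+e_6$ in terms of the elementary symmetric functions $e_k$ of the roots $r_i$, the target pattern $(+,+,-,-,-,+,+)$ amounts to $e_1<0$, $e_2<0$, $e_3>0$, $e_4<0$, $e_5<0$ and $e_6>0$. The constant term $e_6=\alpha\beta\gamma_1\gamma_2\gamma_3\gamma_4$ is automatically positive, so $a_0>0$ never needs attention.

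The genuinely delicate constraints are the two outer ones. From $e_1=\alpha+\beta-\sum_i\gamma_i$ the condition $a_5>0$ reads $\sum_i\gamma_i>\alpha+\beta$, while from $e_5=e_6\sum_i(1/r_i)$ the condition $a_1>0$ reads $\sum_i 1/\gamma_i>1/\alpha+1/\beta$; these two inequalities are exchanged by reversion, as they must be. The first bites when the positive roots are large, the second when they are small. In every representative other than $(0,4,0)$ at most one of them is tight, and one satisfies it by clustering $\beta$ and $\alpha$ just inside the appropriate end of the block $\gamma_1<\cdots<\gamma_4$, so that the negative-root block dominates and the three middle signs $e_2<0$, $e_3>0$, $e_4<0$ fall out. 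The main obstacle is the fixed case $(0,4,0)$, where $\beta$ lies below all the $\gamma_i$ and $\alpha$ above all of them: here both outer inequalities are simultaneously tight — the sum inequality because $\alpha$ exceeds every $\gamma_i$, and the reciprocal-sum inequality because $\beta$ lies below every $\gamma_i$ — so the roots must be tuned with $\alpha$ only slightly above $\gamma_4$, $\beta$ only slightly below $\gamma_1$, and $\gamma_1,\ldots,\gamma_4$ of comparable size. Once a polynomial with the correct signs is found, if necessary first with some coincident moduli, a small perturbation splitting the equal moduli, exactly as in Example~\ref{ex1}, yields a generic HP realizing the required case, and reversion via Remark~\ref{remrevert} then delivers all the partner cases.
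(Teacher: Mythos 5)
Your reduction is sound and matches the paper's: $\Sigma_{2,3,2}$ is invariant under reversion, so by Remark~\ref{remrevert} it suffices to realize one representative of each orbit of $(a,b,w)\mapsto(w,b,a)$, i.e.\ nine cases (the paper exhibits exactly nine explicit polynomials, one per orbit). Your observation that $(1,2,1)$ is the canonical arrangement of $\Sigma_{2,3,2}$ and hence comes for free from part~(1) of Theorem~\ref{tmknown} is a small genuine shortcut the paper does not use. Your bookkeeping of the sign conditions in terms of $e_1,\dots,e_6$ is also correct.

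The gap is that the proposition is an existence statement and your proof never produces the witnesses. For the eight remaining cases you describe where you \emph{would} place the roots and assert that, once the two outer inequalities $\sum_i\gamma_i>\alpha+\beta$ and $\sum_i 1/\gamma_i>1/\alpha+1/\beta$ hold, ``the three middle signs $e_2<0$, $e_3>0$, $e_4<0$ fall out.'' That is exactly the part that cannot be waved through: the outer conditions together with $e_6>0$ are compatible with several other sign patterns ($\Sigma_{3,2,2}$, $\Sigma_{2,2,3}$, $\Sigma_{3,1,3}$, \dots); indeed the paper's own example $(x-1)(x+1.9)(x+1.91)(x+1.92)(x+2)(x-2.1)$ has $a_5>0$, $a_1>0$, $a_0>0$ and yet $a_4>0$, so it defines $\Sigma_{3,2,2}$, not $\Sigma_{2,3,2}$. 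Whether a given placement of $\beta,\alpha$ among the $\gamma_i$ yields the middle signs $-,-,-$ is a quantitative question that must be settled case by case, either by exhibiting explicit numerical polynomials (as the paper does) or by a limiting/perturbation argument with the inequalities actually verified. Until those eight witnesses (or verified constructions) are supplied, the proof is a plan rather than a proof.
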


  \begin{proof}

    The SP $\Sigma _{2,3,2}$ is center-symmetric, this is why if this SP
    is realizable in the case $(a,b,w)$ by a HP $P$, then it is realizable in
    the case $(w,b,a)$ by the HP $P^R$. We prove the proposition by
    exhibiting examples
    of HPs which realize $\Sigma _{2,3,2}$ in the indicated cases. The above
    observations allow to skip some of the cases.

  $$\begin{array}{l}
    (1,0,3)~~~:~~~(x+0.01)(x-0.1)(x-1)(x+1.01)(x+1.02)(x+1.03)=\\ \\
      x^6+1.97x^5-0.1253x^4-2.067553x^3\\ \\
      -0.87576764x^2+0.097559534x+0.001061106~,\\ \\
    (1,1,2)~~~:~~~(x+0.01)(x-0.1)(x+0.99)(x-1)(x+1.02)(x+1.03)=\\ \\
      x^6+1.95x^5-0.1445x^4-2.045655x^3\\ \\
      -0.85653356x^2+0.095648466x+0.001040094~,\\ \\ 
    (1,2,1)~~~:~~~(x+0.01)(x-0.1)(x+0.98)(x+0.99)(x-1)(x+1.03)=\\ \\
      x^6+1.91x^5-0.1817x^4-2.001931x^3\\ \\
      -0.81930584x^2+0.091937534x+0.000999306~,\\ \\
    (1,3,0)~~~:~~~(x+0.01)(x-0.1)(x+0.97)(x+0.98)(x+0.99)(x-1)=\\ \\
      x^6+1.85x^5-0.2345x^4-1.936645x^3\\ \\
      -0.76643456x^2+0.086638466x+0.000941094~,\end{array}$$
    $$\begin{array}{l}
    (0,2,2)~~~:~~~
    (x-1)(x+1.1)(x+2)(x-2.1)(x+2.2)(x+2.3)=\\ \\
    x^6+4.5x^5-0.25x^4-24.205x^3-23.6436x^2+19.2214x+23.3772~,\\ \\
    (0,1,3)~~~:~~~(x-1)(x+1.1)(x-2)(x+2.05)(x+2.1)(x+2.15)=\\ \\
    x^6+4.4x^5-0.0425x^4-21.8665x^3-20.921675x^2+17.068025x+20.36265~,\\ \\
    (0,4,0)~~~:~~~(x-1)(x+2.9)(x+3)(x+3.1)(x+3.2)(x-8)=\\ \\
    x^6+3.2x^5-46.01x^4-291.172x^3-487.418x^2+129.968x+690.432~,\\ \\
    (2,0,2)~~~:~~~(x+0.8)(x+0.9)(x-1)(x-5)(x+5.1)(x+5.2)=\\ \\
    x^6+6x^5-22.25x^4-156x^3-72.1556x^2+147.9336x+95.472~,\\ \\
    (0,0,4)~~~:~~~(x-1)(x-1.001)(x+1.002)(x+1.01)(x+1.02)(x+1.1)=\\ \\
    x^6+2.131x^5-0.867672x^4-4.26624106x^3\\ \\
    -1.268949846x^2+2.135240980x+1.136621926~.
  \end{array}$$
\end{proof}

  \subsection{The SP $\Sigma _{3,2,2}$}

  \begin{prop}
    The cases $(a,b,w)$ with $w\geq 1$ are realizable with the SP
    $\Sigma _{3,2,2}$.
    \end{prop}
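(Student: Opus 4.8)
The plan is to obtain every case with $w\ge 1$ from the already resolved degree $5$ classification by multiplying a degree $5$ HP by a linear factor whose root is a negative number of very large modulus. Let $Q=\sum_{i=0}^{5}q_ix^i$, $q_5=1$, be a degree $5$ HP realizing the SP $\Sigma_{2,2,2}=(+,+,-,-,+,+)$ in a case $(a',b',w')$ with $a'+b'+w'=3$, and set $P_M:=(x+M)\,Q(x)$ for a parameter $M>0$. The coefficient of $x^i$ in $P_M$ equals $q_{i-1}+Mq_i$ (with $q_{-1}:=0$), which for $M$ large enough has the sign of $q_i$ when $0\le i\le 5$, while the coefficient of $x^6$ is $1$. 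Since all $q_i$ are nonzero, for large $M$ the polynomial $P_M$ is a HP whose SP is that of $Q$ with one extra $+$ prepended at the leading end, that is $\Sigma_{3,2,2}$.

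I would then track the case through the construction. The factor $(x+M)$ adjoins the negative root $-M$, whose modulus exceeds every modulus of a root of $Q$, and in particular exceeds $\alpha$; the relative order of the moduli of $\beta$, $\alpha$ and of the negative roots of $Q$ is untouched. Hence the new largest modulus is $\gamma_4=M$, and $P_M$ realizes $\Sigma_{3,2,2}$ in the case $(a',b',w'+1)$. As $(a',b',w')$ runs over all triples summing to $3$, the triple $(a',b',w'+1)$ runs over exactly the cases $(a,b,w)$ with $a+b+w=4$ and $w\ge 1$, which is the assertion of the proposition. It is no accident that the construction produces precisely the cases $w\ge 1$: adjoining a root of maximal modulus can only increase $w$, and the excluded cases $w=0$ are exactly those unreachable this way.

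The remaining input is that $\Sigma_{2,2,2}$ is realizable in every case $(a',b',w')$, $a'+b'+w'=3$, which is part of the degree $5$, $c=2$ answer of~\cite{Ko1}. I would present this compactly by invoking the center-symmetry of $\Sigma_{2,2,2}$: by Remark~\ref{remrevert}, realizability in $(a',b',w')$ is equivalent to realizability in $(w',b',a')$, so the ten cases collapse to six, of which $(1,1,1)$ is canonical and hence realizable by part (1) of Theorem~\ref{tmknown}. For each of the remaining five classes, namely $(0,3,0)$, $\{(3,0,0),(0,0,3)\}$, $\{(2,1,0),(0,1,2)\}$, $\{(2,0,1),(1,0,2)\}$ and $\{(1,2,0),(0,2,1)\}$, I would exhibit one explicit degree $5$ HP, choosing $\beta<\alpha$ and the three negative roots so as to realize the prescribed modulus order (which automatically guarantees hyperbolicity and the correct case) and expanding to confirm the four middle signs $(+,-,-,+)$; the extreme coefficients $1$ and $Q(0)=\alpha\beta\gamma'_1\gamma'_2\gamma'_3>0$ are correct for free. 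This is more economical than a direct degree $6$ argument, since $\Sigma_{3,2,2}$ is not center-symmetric and would otherwise require ten separate witnesses.

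The main obstacle is this last verification in the classes where the two positive roots lie away from the ends of the modulus ordering — notably the configurations neighbouring $(1,1,1)$ and the case $(0,3,0)$ — since there the demand that the coefficients of $x^3$ and $x^2$ be negative while those of $x^4$ and $x$ stay positive constrains the root configuration rather tightly. The standard remedy is to take one negative modulus large and the others comparable to $\alpha$, so that in each of the four middle coefficients a single dominant monomial fixes the sign. Once such degree $5$ witnesses are in hand, the lift by $(x+M)$ is immediate and completes the proof.
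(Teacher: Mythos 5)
Your proposal is correct and is essentially the paper's own argument: the paper likewise takes a degree~$5$ HP realizing $\Sigma_{2,2,2}$ in the case $(a,b,w)$ (citing \cite{Ko1} for the fact that all such cases are realizable) and multiplies it by $(1+\varepsilon x)$ with $\varepsilon>0$ small, which is your factor $(x+M)$ up to normalization, to adjoin a negative root of dominant modulus and land in the case $(a,b,w+1)$ with the SP $\Sigma_{3,2,2}$. Your additional sketch of re-deriving the degree~$5$ input via center-symmetry and explicit witnesses is sound but unnecessary, since the paper simply invokes \cite{Ko1} for it.
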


  \begin{proof}
    For $d=5$ and for the SP $\Sigma _{2,2,2}$, all cases
  $(a,b,w)$ with $a+b+w=3$ are realizable, see~\cite{Ko1}.
  Hence for $d=6$, all corresponding cases $(a,b,w+1)$ are also realizable.
  Indeed, if the degree $5$ HP $P(x)$ defines the SP $\Sigma _{2,2,2}$ and
  realizes the case $(a,b,w)$, then for
  $\varepsilon >0$ small enough, the HP $(1+\varepsilon x)P(x)$ defines
  the SP $\Sigma _{3,2,2}$ and realizes the
  case $(a,b,w+1)$. Indeed, the root $-1/\varepsilon$ has the largest of
  the moduli of its roots.
  \end{proof}

  Thus it remains to consider the cases $(a,b,0)$ with
  $a+b=4$. The case $(0,4,0)$ is realizable by the HP

  $$\begin{array}{l}
    (x-1)(x+1.9)(x+1.91)(x+1.92)(x+2)(x-2.1)=\\ \\
    
    x^6+4.63x^5+0.5412x^4-24.36394x^3-28.469668x^2+17.398152x+29.264256~.
  \end{array}$$

  \begin{prop}
    The case $(4,0,0)$ is not realizable with the SP $\Sigma _{3,2,2}$.
  \end{prop}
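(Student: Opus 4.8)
The plan is to differentiate $P$ once and reduce the claim to the degree $5$ classification of realizable cases for $\Sigma _{3,2,1}$. Suppose for contradiction that a HP $P$ realizes $\Sigma _{3,2,2}=(+,+,+,-,-,+,+)$ in the case $(4,0,0)$, and write its roots as in Notation~\ref{notaproofs}, $\xi _1<\xi _2<\xi _3<\xi _4<0<\xi _5<\xi _6$. By definition the case $(4,0,0)$ means that both positive roots dominate every negative modulus; in particular $-\xi _1=\gamma _4<\xi _5=\beta$.

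First I would determine the sign pattern of $P'$. Differentiation multiplies the coefficient of $x^j$ by $j>0$, so $P'$ has coefficient signs $(+,+,+,-,-,+)$, i.e. it defines $\Sigma _{3,2,1}$ in degree $5$; moreover $P'$ is hyperbolic and its roots $\zeta _1<\cdots <\zeta _5$ interlace those of $P$ by Rolle's theorem. Since the last but first sign of $\Sigma _{3,2,2}$ is $+$, Notation~\ref{notaproofs} gives $\zeta _3<0<\zeta _4$, so the two positive roots of $P'$ are $\zeta _4<\zeta _5$ and the three negative ones are $\zeta _1,\zeta _2,\zeta _3$.

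The key step, and the only one that really uses the hypothesis $(4,0,0)$, is to identify $\zeta _5$ as the root of $P'$ of largest modulus. For $k=1,2,3$ the root $\zeta _k$ lies in $(\xi _k,\xi _{k+1})$ with both endpoints negative, whence $|\zeta _k|<|\xi _k|\leq |\xi _1|=\gamma _4$; on the other hand $\zeta _5>\xi _5=\beta >\gamma _4$ and $\zeta _5>\zeta _4$. Thus the larger positive root $\alpha '=\zeta _5$ of $P'$ exceeds every negative modulus, so $P'$ realizes $\Sigma _{3,2,1}$ in a case $(a',b',0)$ with $w'=0$. This is impossible: by part~(2) of Theorem~\ref{tmknown} with $d=5$ (there $\Sigma _{3,2,1}=\Sigma _{d-2,2,1}$) the SP $\Sigma _{3,2,1}$ is realizable only in $(1,0,2)$ and in the cases $(0,b,w)$ with $b+w=3$, and the single such case with $w=0$, namely $(0,3,0)$, is excluded by \cite[Proposition~1]{Ko1}; hence no realizable case has $w=0$. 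This contradiction would prove the proposition. I expect the modulus comparison of this last paragraph to be the crux --- the computation of the sign pattern of $P'$ and the reduction to degree $5$ being routine --- and the only loose end to tidy up is the degenerate possibility that two roots of $P'$ share a modulus, which the strict inequalities above rule out for the dominant root $\zeta _5$ in any case.
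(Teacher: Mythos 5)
Your argument is correct and coincides with the paper's own proof: both pass to $P'$, which defines $\Sigma_{3,2,1}$, observe that in the case $(4,0,0)$ one has $\zeta_5>|\zeta_k|$ for $k=1,2,3$ so that $P'$ would realize a case $(a',b',0)$, and then exclude this via part (2) of Theorem~\ref{tmknown} together with \cite[Proposition~1]{Ko1}. Your write-up merely makes the modulus comparison and the enumeration of excluded cases more explicit than the paper does.
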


  \begin{proof}
    Suppose that the HP $P$ realizes the case $(4,0,0)$ with
    the SP $\Sigma _{3,2,2}$. For the roots of $P$ and $P'$ we use
    Notation~\ref{notaproofs}; in particular, $\xi _4<0<\xi _5$ and
    $\zeta _3<0<\zeta _4$. The polynomial $P'$ defines the SP $\Sigma _{3,2,1}$
    and for its roots
    one has $\zeta _5>|\zeta _k|$, $k=1$, $2$ and $3$. It follows from
    part (2) of Theorem~\ref{tmknown} and \cite[Proposition~1]{Ko1} (by which
    for $d=5$, the case $(0,3,0)$ is not realizable with
    the SP $\Sigma _{3,2,1}$) 
    that this is impossible.
    \end{proof}

  \begin{prop}
    The cases $(1,3,0)$, $(2,2,0)$ and $(3,1,0)$ are not realizable with
    the SP $\Sigma _{3,2,2}$.
  \end{prop}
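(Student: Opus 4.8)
The plan is to argue by contradiction along the lines of the preceding propositions, combining one-parameter deformations that preserve the sign pattern $\Sigma _{3,2,2}$ with the constraints coming from the derivative. Assume that a HP $P$ realizes $\Sigma _{3,2,2}$ in one of the cases $(a,b,0)$ with $a\geq 1$, $b\geq 1$, $a+b=4$. Using Notation~\ref{notaproofs}, the last-but-first sign of $\Sigma _{3,2,2}$ is $+$, so $\zeta _3<0<\zeta _4$ and $P'$ defines $\Sigma _{3,2,1}$; moreover $w=0$ forces $\xi _6=\alpha$ to be the root of largest modulus, while $a\geq 1$ and $b\geq 1$ mean that at least one negative root has modulus below $\beta =\xi _5$ and at least one has modulus in $(\beta ,\alpha )$. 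By Rolle's theorem one has $\zeta _4\in (0,\beta )$ and $\zeta _5\in (\beta ,\alpha )$, so for $P'$ one gets $\beta '=\zeta _4<\beta$ and $\alpha '=\zeta _5$.

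The first thing I would try is the purely differential argument used for the cases $(4,0,0)$ and $(0,4,0)$: by part (2) of Theorem~\ref{tmknown} together with \cite[Proposition~1]{Ko1}, the SP $\Sigma _{3,2,1}$ (for $d=5$) is realizable only in the cases $(1,0,2)$, $(0,0,3)$, $(0,1,2)$ and $(0,2,1)$; in particular its larger positive root is never of maximal modulus and at most one negative modulus lies below its smaller positive root. However, the inequalities above show that the case of $P'$ is not determined by the case $(a,b,0)$ of $P$: the relative position of $\zeta _5$ and of the largest negative modulus $|\zeta _1|$ depends on the metric data and not just on the triple $(a,b,0)$. Thus the differential argument alone does not close the three cases, and this is precisely the main obstacle.

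To overcome it I would, as in the proof for $\Sigma _{4,2,1}$ in the case $(0,3,1)$, deform $P$ to a degenerate but still hyperbolic polynomial with as few parameters as possible, keeping $\Sigma _{3,2,2}$ (or at worst one of its zero-completions) fixed throughout. Concretely, I would add successive multiples of auxiliary polynomials of the form $x^k\prod (x^2-\xi _j^2)$ and $x^k(x-\xi _i)^2(\cdots )$, chosen so that only one prescribed coefficient can change sign along each deformation, in order to coalesce a pair of negative roots into a double root and to force equalities between moduli (typically $-\xi _i=\xi _5$ or $-\xi _i=\xi _6$). This should reduce the configuration to a one- or two-parameter normal form such as $(x+g)^2(x^2-1)(x+B)(x-A)$, with orderings of $g$, $A$, $B$ dictated by the case at hand. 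On this family I would impose the boundary relation produced by the last deformation (a vanishing coefficient) and then compute the coefficient of $x^4$, which must be positive for $\Sigma _{3,2,2}$, expecting that the imposed relation forces it to be nonpositive, exactly as the coefficient of $x^3$ was forced to be negative in the $\Sigma _{4,2,1}$ arguments.

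The main difficulty is organizational rather than conceptual: one must enumerate the deformation branches (which pair of roots coalesces first, which modulus-equality is reached first), verify in each branch that hyperbolicity and the sign pattern are preserved — a further sign change would create a third positive root, which is impossible — and check that every branch terminates in one of a small number of degenerate normal forms on which the final coefficient computation can be carried out. Since the three cases $(1,3,0)$, $(2,2,0)$ and $(3,1,0)$ differ only in the placement of the matched moduli relative to $\beta$ and $\alpha$, I expect the same normal form and the same decisive coefficient to settle all three, the underlying mechanism being a descent that ultimately reflects the already established impossibility of the case $(4,0,0)$.
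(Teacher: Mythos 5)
Your overall strategy is the right one and coincides in outline with the paper's: deform $P$ within the sign pattern by adding nonnegative multiples of auxiliary polynomials whose own sign patterns are dominated by $\Sigma_{3,2,2}$, drive the configuration to a degenerate normal form with a multiple root and/or forced modulus equalities, and then derive a contradiction from the sign of the coefficient of $x^4$ (you correctly identify $x^4$ as the decisive coefficient, in analogy with $x^3$ for $\Sigma_{4,2,1}$). Your preliminary observation that the derivative argument alone cannot settle these three cases is also correct and matches why the paper reserves that argument for $(4,0,0)$.

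However, what you have written is a plan, not a proof, and the entire mathematical content of the paper's argument is in the part you defer as ``organizational.'' Three things are missing and cannot be waved through. First, the choice of deformations is not generic: the paper starts with $P_t:=P+t(x-\xi_1)(x-\xi_6)x^2$, whose sign pattern $(0,0,+,-,-,0,0)$ is what guarantees preservation of $\Sigma_{3,2,2}$, and one must determine the monotonicity of each root under each deformation to know which degeneration occurs first; none of this is specified in your proposal. Second, your expectation that ``the same normal form and the same decisive coefficient settle all three'' cases is wrong: the branch structure (which modulus equality is reached, versus which pair of roots coalesces, and in which order) differs among $(1,3,0)$, $(2,2,0)$ and $(3,1,0)$, and the paper ends up with roughly half a dozen distinct degenerate forms --- $(x+B)(x+1)^2(x^2-g^2)(x-A)$, $(x+1)^3(x+g)(x-h)(x-A)$, $(x+1)^2(x-1)(x+g)(x+B)(x-A)$, $(x^2-1)(x+g)^2(x+B)(x-A)$, $(x+1)^3(x-g)(x+B)(x-A)$ and $(x+1)^2(x^2-B^2)(x+h)(x-A)$ --- each with its own ordering constraints on the parameters and its own estimate showing the $x^4$-coefficient is negative. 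Third, the suggestion that the mechanism ``reflects the impossibility of $(4,0,0)$'' is not borne out: the paper's proof of the present proposition is independent of the $(4,0,0)$ case and does not descend to it. Until the branches are enumerated and the coefficient inequalities are actually verified in each terminal form, the statement is not proved.
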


  \begin{proof}
    Suppose that the HP $P$ realizes the case $(1,3,0)$ or
    $(2,2,0)$ or $(3,1,0)$ with
    the SP $\Sigma _{3,2,2}$. We use Notation~\ref{notaproofs}. Consider the
    one-parameter deformation

    $$P_t:=P+t(x-\xi _1)(x-\xi _6)x^2~~~,~~~t\geq 0~.$$
    The polynomial
    $(x-\xi _1)(x-\xi _6)x^2$ defines the SP $(0,0,+,-,-,0,0)$, because
    $|\xi _1|<\xi _6$; therefore the SP does not change throughout the
    deformation $P_t$. The roots $\xi _1$ and $\xi _6$ do not change,
    $\xi _2$ and $\xi _4$ increase while $\xi _3$ and $\xi _5$ decrease. Hence
    there exists $t_0>0$ such that for $P_{t_0}$ at least one of the two things
    holds true:
    \vspace{1mm}

    \begin{tabbing}
      A) \= in case $(1,3,0)$, one has $|\xi _4|=|\xi _5|$,
      i.e. $-\xi _4=\xi _5$;\\ 
      \> in case $(2,2,0)$, one has $|\xi _3|=|\xi _5|$,
      i.e. $-\xi _3=\xi _5$;\\ 
      \> \hspace{4mm}(the possibility $|\xi _2|=|\xi _5|$ also exists,
      but we consider it\\
      \> \hspace{4mm}only in
      the case $(3,1,0)$, because in both cases $(2,2,0)$ and $(3,1,0)$\\
      \> \hspace{4mm}it gives 
      $\xi _2=-\xi _5$, $-\xi _1<\xi _6$);\\ 
      \> in case $(3,1,0)$, one has
      $|\xi _2|=|\xi _5|$, i.e. $-\xi _2=\xi _5$;\\
      B) \> the roots $\xi _2$ and $\xi _3$ coalesce.
    \end{tabbing}
    \vspace{1mm}

    Consider the case $(1,3,0)$. Suppose that A) takes place.
    Set $Q:=P_{t_0}$. Then throughout
    the deformation

    \begin{equation}\label{eqQs}
      Q_s:=Q+s(x^2-\xi _1^2)(x^2-\xi _5^2)~~~,~~~s\geq 0~,
      \end{equation}
    the SP does not change.
    Indeed, the polynomial $(x^2-\xi _1^2)(x^2-\xi _5^2)$ defines the SP
    $(0,0,+,0,-,0,+)$. The roots $\xi _1$, $\xi _4$ and $\xi _5$ do not change
    while $\xi _6$ decreases (but
    never becomes equal to $-\xi _1$). The root $\xi _2$ increases while
    $\xi _3$ decreases.

    Therefore
    there exists $s_0>0$ for which both A) and B) take place. In this case
    after rescaling of $x$ one can have

    $$\xi _2=\xi _3=-1~~~,~~~-\xi _4=\xi _5=g\in (0,1)~~~,~~~-\xi _1=B>1~~~
    {\rm and}~~~\xi _6=A>B~.$$
    The
    corresponding polynomial equals $(x+B)(x+1)^2(x^2-g^2)(x-A)$ and its
    coefficient of $x^4$ is $2(B-A)+(1-AB)-g^2<0$ which contradicts the SP
    $\Sigma _{3,2,2}$. Hence A) does not take place.

    Suppose that B) takes place. We use again the rescaling of $x$ leading to
    $\xi _2=\xi _3=-1$. Consider the polynomial $S:=(x+1)^2(x-1/2)$.
    It defines the SP $(0,0,0,+,+,0,-)$. Then throughout the deformation

    \begin{equation}\label{eqQv}
      Q_v:=Q-vS~~~,~~~v\geq 0~,
      \end{equation}
    the SP does not change. The roots $\xi _2$ and 
    $\xi _3$ do not change while
    $\xi _1$ and $\xi _6$ increase; $\xi _6$ and does not coalesce with
    $\xi _5$. The root $\xi _5$
    increases, if $\xi _5<1/2$,  decreases, if $\xi _5>1/2$,
    and remains fixed, if $\xi _5=1/2$. 
    As A) does not take place,
    for some $v_0>0$,
    \vspace{1mm}

    C) the root $\xi _1$ coalesces with $\xi _2=\xi _3$.
    \vspace{1mm}

    In this case $Q_{v_0}=(x+1)^3(x+g)(x-h)(x-A)$, where $-\xi _4=g<h=\xi _5<1$
    and $A=\xi _6>1$. The coefficient of $x^4$ in $Q_{v_0}$ equals

    $$3+3g-3h-gh-(3+g-h)A\leq 3+3g-3h-gh-3-g+h=2(g-h)-gh<0$$
    which contradicts the SP $\Sigma _{3,2,2}$. Thus the case $(1,3,0)$ is not
    realizable with the SP $\Sigma _{3,2,2}$.

    Consider the case $(2,2,0)$. Suppose that A) takes place.
    Throughout the deformation $Q_s$ (see (\ref{eqQs})) the
    roots $\xi _1$, $\xi _3$ and $\xi _5$ do not change; 
    $\xi _6$ decreases without becoming equal to $-\xi _1$; $\xi _4$ 
    decreases and $\xi _2$ increases. Hence, for some $s=s_0>0$, there are two
    possibilities. The first is to have A) and B), i.e. one can set 

    $$\xi _2=\xi _3=-1=-\xi _5~~~,~~~-\xi _4=g\in (0,1)~~~,~~~-\xi _1=B>1~~~
    {\rm and}~~~\xi _6=A>B~.$$
    The corresponding polynomial is $W:=(x+1)^2(x-1)(x+g)(x+B)(x-A)$ whose
    coefficient of $x^4$ equals

    $$W_4:=-1+g+B+gB-A-Ag-AB=(B-A)(g+1)+(g-AB)-1<0$$
    which contradicts the SP $\Sigma _{3,2,2}$. The second is to have A) and
    \vspace{1mm}

    D) the root $\xi _4$ coalesces with $\xi _3$.
    \vspace{1mm}

    In this case one sets

    $$\xi _3=\xi _4=-1=-\xi _5~~~,~~~-\xi _2=g>1~~~,~~~-\xi _1=B>g~~~
    {\rm and}~~~\xi _6=A>B~.$$
    The corresponding polynomial equals $W$ and its coefficient of $x^4$ is
    $W_4<0$. So suppose that B) takes place. Then A) takes place as well, and
    this possibility was already rejected.

    Consider the case $(3,1,0)$. Suppose that A) takes place. Throughout
    the deformation $Q_s$ (see (\ref{eqQs})) the roots $\xi _1$, $\xi _2$
    and $\xi _5$ do not
    change, $\xi _6$ decreases, but remains larger than $|\xi _1|$, $\xi _3$
    increases and $\xi _4$ decreases. Hence for some $s=s_0>0$, one has D).
    Consider the polynomial 
    $(x^2-1)(x+g)^2(x+B)(x-A)$, 
    where 
    
    $$-\xi _2=\xi _5=1~~~,~~~-\xi _3=-\xi _4=g\in (0,1)~~~,~~~-\xi _1=B>1~~~{\rm and}~~~\xi _6=A>B~.$$
    The coefficient of $x^4$ equals 
    
    $$-1+g^2+2gB-2Ag-BA=-(1-g^2)-2g(A-B)-BA<0$$
    which contradicts the SP $\Sigma _{3,2,2}$.

    If B) (but not A)) takes place,
    then in the deformation $Q_v$ (see (\ref{eqQv}))
    the roots $\xi _2=\xi _3$ do not change,
    $\xi _1$ and $\xi _6$ increase while $\xi _4$ decreases. We admit that
    $\xi _5$ might increase or decrease or remain fixed. Hence
    for some $v=v^*$, either D) takes place or  
    \vspace{1mm}
    
    E) one has $|\xi _5|=|\xi _1|$, i.e. $\xi _5=-\xi _1$. 
    \vspace{1mm}
    
    If B) and D) take place, then one considers the polynomial
    $(x+1)^3(x-g)(x+B)(x-A)$, where 
    
    $$1=-\xi _2=-\xi _3=-\xi _4~~~,~~~1<g=\xi _5<B=-\xi _1<A=\xi _6~.$$ 
    The coefficient of $x^4$ equals 
    
    $$3-3g+3B-gB-3A+Ag-AB=-3(g-1)-3(A-B)-A(B-g)-gB<0$$
    which contradicts the SP $\Sigma _{3,2,2}$.
    
    If B) and E) take place, then one considers the polynomial 
    $(x+1)^2(x^2-B^2)(x+h)(x-A)$ with 
    
    $$-\xi _4=h<1=-\xi _2=-\xi _3<B=\xi _5=-\xi _1<A=\xi _6~.$$
     The coefficient of $x^4$ equals
     
     $$1-B^2+2h-2A-Ah=-(B^2-1)-2(A-h)-Ah<0$$
     which is a contradiction with the SP $\Sigma _{3,2,2}$.

    The proposition is proved.

    \end{proof}

  \subsection{Summarization of the results\protect\label{subsecsum}}

  In this subsection we summarize the results of the section. For each SP of
  the left column we give in the column Y the cases $(a,b,w)$, $a+b+w=4$, which
  are realizable, and in the column N the ones that are not realizable. We
  list only cases which are allowed by Theorem~\ref{tmknown}. The results
  concerning SPs which are not on the list are obtained by reversion, see
  Remark~\ref{remrevert}.

  $$\begin{array}{lccc}\, \, \, \, {\rm SP}&{\rm Y}&&{\rm N}\\ \\
    (1,5,1)&(0,4,0)&&\\ \\ (m,1,q),~m+q=6&(q-1,0,m-1)&&\\ \\ 
    (2,4,1)&(0,2,2),~(0,3,1),~(0,4,0)&&(0,0,4),~(0,1,3)\\ \\
    (3,3,1)&(1,0,3),~(0,0,4),~(0,1,3),&&\\ &(0,2,2),~(0,3,1),~(0,4,0)&&\\ \\ 
    (4,2,1)&(1,0,3),~(0,0,4),~(0,1,3),&&(0,3,1),~(0,4,0)\\ &(0,2,2)&&\\ \\ 
    (2,3,2)&{\rm all~possible~cases}&&\\ \\
    (3,2,2)&{\rm all~other~cases}&&(4,0,0),~(3,1,0),\\  &&&(2,2,0),~(1,3,0)
  \end{array}$$

\end{document}